\title[Logarithmic good reduction of abelian varieties]{Logarithmic good reduction \\ of abelian varieties}
\author{Alberto Bellardini} 
\address{University of Leuven, Department Wiskunde, Celestijnenlaan 200B, 3001 Heverlee, Belgium}
\email{albertobellardini@yahoo.it}
\author{Arne Smeets}
\address{Radboud Universiteit Nijmegen, IMAPP, Heyendaalseweg 135, 6525 AJ Nijmegen, The Netherlands \emph{and} University of Leuven, Departement Wiskunde, Celestijnenlaan 200B, 3001 Heverlee, Belgium} 
\email{arnesmeets@gmail.com}
\begin{document}
\maketitle

\begin{abstract}
Let $K$ be a field which is complete for a discrete valuation. We prove a logarithmic version of the N\'eron--Ogg--Shafarevich criterion: if $A$ is an abelian variety over $K$ which is cohomologically tame, then $A$ has good reduction in the logarithmic setting, i.e. there exists a projective, log smooth model of $A$ over $\mathcal{O}_K$. This implies in particular the existence of a projective, regular model of $A$, generalizing a result of K\"unnemann. The proof combines a deep theorem of Gabber with the theory of degenerations of abelian varieties developed by Mumford, Faltings--Chai \emph{et al}.
\end{abstract}

\section{Introduction}

\subsection{Notation} Let $K$ be a field complete for a discrete valuation, with residue field $k$ of characteristic $p$. Let $\mathcal{O}_K$ be its ring of integers, with spectrum $S$, generic point $\eta$ and closed point $s$. Fix a separable closure $K^s$ of $K$ and let $K^t$ be the maximal extension of $K$ inside $K^s$ which is tamely ramified. Let $P = \mathrm{Gal}(K^s/K^t)$ be the wild inertia subgroup of the Galois group $\mathrm{Gal}(K^s/K)$. Fix a prime $\ell$ different from the residual characteristic $p$.

\subsection{Question} Let $\mathcal{X}$ be a proper, flat $S$-scheme, and $X = \mathcal{X}_{\eta}$. We equip $\mathcal{X}$ with the natural log structure induced by its special fibre $\mathcal{X}_s$. Let $\mathcal{X}^\dag$ denote the resulting log scheme: we refer to \cite{katologstructures} for the basic terminology. In particular, $S^\dagger$ denotes $S$ equipped with the log structure induced by $s$. A beautiful result of Nakayama \cite[Corollary 0.1.1]{Nak} says that if the morphism of log schemes $\mathcal{X}^\dag \to S^\dag$ is \emph{log smooth}, then the action of the wild inertia group $P$ on $H^\star(X \times_K K^s, \mathbf{Q}_\ell)$ is trivial; a variety with the latter property will be said to be \emph{cohomologically tame}. It is natural to ask when the converse holds: given a smooth, proper variety $X$ over $K$ which is cohomologically tame, can one conclude that there exists a proper model $\mathcal{X}$ over $S$ which is log smooth?  Of course this question can only be interesting in the case of mixed or positive characteristic, since both cohomological tameness and the existence of a proper, log smooth model are automatic if the residual characteristic is zero.

\subsection{Results}  The answer to the question raised above is known to be affirmative when $X$ is a curve of genus at least $2$, due to the work of T. Saito \cite{Sai, saitologsmred} and Stix \cite{Sti}. On the other hand, the answer to the question is already negative for some genus $1$ curves:

\begin{exem} \label{example} Assume that $k$ is algebraically closed of characteristic $p > 0$. Let $E$ be an elliptic curve over $K$ with semistable bad reduction. Let $X$ be a torsor under $E$ of period $p$; by \cite[Theorem 6.6, Corollary 6.7]{LLR}, such a torsor exists, and moreover the ``type'' of the special fibre $\mathcal{X}_s$ of its minimal regular proper model $\mathcal{X}$ with strict normal crossings is obtained by multiplying the corresponding type for $E$ by $p$. Then $X$ is certainly cohomologically tame (since $E$ is), but the argument in \cite[Proposition 5.2]{Sti} shows that $X$ does \emph{not} have logarithmic good reduction in this case. Indeed, the morphism $\mathcal{X}^\dagger \to S^\dagger$ is not log smooth at the points in $\mathcal{X}_s$ where the log structure has rank $2$. \end{exem}

It is not clear what to expect in higher dimension, but note that proving the existence of a proper, log smooth model is really proving a version of resolution of singularities: the existence of such a model implies the existence of a proper, regular model with strict normal crossings by \cite[\S 10.4]{kato}. Hence this problem is likely to be hard in higher dimension! 

The goal of this short note is to prove such a result for abelian varieties. Our result can be thought of as a logarithmic version of the N\'eron--Ogg--Shafarevich criterion proven by Serre--Tate \cite[Theorem 1]{st}, and it is the natural complement for abelian varieties to the work of T. Saito and Stix on curves mentioned above. Of course only one implication is new (as mentioned above, the other one is \cite[Corollary 0.1.1]{Nak}):

    \begin{theo} \label{maintheorem} Let $A$ be an abelian variety over $K$. Then $A$ is cohomologically tame if and only if there exists a projective model $\mathcal{X}$ of $A$ over $S$ such that $\mathcal{X}^\dagger \to S^\dagger$ is log smooth. \end{theo}

For abelian varieties, the cohomological tameness condition is equivalent to the Galois action on the $\ell$-adic Tate module being tamely ramified. It is classical that this is the case whenever $p$ is sufficiently large. In fact, $p > 2 \dim A + 1$ suffices; this can be deduced from Grothendieck's results \cite[Proposition 3.5, Th\'eor\`eme 4.3]{grothendieck} (see \cite[\S 3]{Loerke} for details). Our result does not extend to arbitrary torsors under abelian varieties, as Example \ref{example} illustrates.
 
    As an immediate corollary of our result, we obtain the following generalization of an important result of K\"unnemann \cite[Theorem 3.5]{ku}, who proves the existence of projective, regular models in the case of semi-abelian reduction, i.e. unipotent Galois action: 
    
        \begin{coro} Let $A$ be a cohomologically tame abelian variety over $K$. Then there exists a projective regular model $\mathcal{X}$ of $A$ over $S$, with $\mathcal{X}_s$ a strict normal crossings divisor. \end{coro}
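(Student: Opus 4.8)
The plan is to deduce the corollary directly from Theorem \ref{maintheorem} combined with Kato's desingularization theory for log regular schemes. First I would apply the main theorem: since $A$ is cohomologically tame, there exists a projective model $\mathcal{X}$ of $A$ over $S$ for which $\mathcal{X}^\dagger \to S^\dagger$ is log smooth. The base $S^\dagger$ is log regular, being $\operatorname{Spec} \mathcal{O}_K$ equipped with the standard log structure attached to its closed point, and log regularity is inherited under log smooth morphisms; hence the total log scheme $\mathcal{X}^\dagger$ is itself log regular (and its log structure is fine and saturated). In particular $\mathcal{X}$ is toroidal, which is exactly the input required for the resolution machinery.

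The second step is to invoke Kato's resolution of log regular schemes, as cited already in the introduction via \cite[\S 10.4]{kato}. A log regular scheme carries an associated conical polyhedral complex (fan), and any subdivision of this fan induces a proper birational modification; a suitable subdivision produces a proper birational morphism $f \colon \mathcal{X}' \to \mathcal{X}$ with $\mathcal{X}'$ regular and special fibre $\mathcal{X}'_s$ a strict normal crossings divisor. Since $A = \mathcal{X}_\eta$ is smooth over $K$, the log structure is trivial along the generic fibre, so $f$ is an isomorphism over $\eta$; consequently $\mathcal{X}'$ is again a model of $A$, and it is proper over $S$ because $f$ is proper and $\mathcal{X} \to S$ is projective.

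It remains only to upgrade properness to projectivity, and this is the single point requiring genuine care. Because $\mathcal{X} \to S$ is already projective, it suffices to ensure that the resolving morphism $f$ is projective rather than merely proper. This is achieved by choosing a \emph{projective} (i.e. polytopal, or coherent) subdivision of the fan associated to $\mathcal{X}^\dagger$: every fan admits such a refinement, and the modification attached to a projective subdivision is a projective morphism. The composite $\mathcal{X}' \to \mathcal{X} \to S$ of two projective morphisms is then projective, yielding the desired projective regular model with strict normal crossings special fibre. The main obstacle, such as it is, lies entirely in this projectivity bookkeeping at the combinatorial level of the fan; everything else is a formal consequence of Theorem \ref{maintheorem} and Kato's theory.
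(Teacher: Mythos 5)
Your proposal is correct and follows essentially the same route as the paper: deduce log regularity of $\mathcal{X}^\dagger$ from log smoothness over the log regular base $S^\dagger$ (Kato, Theorem 8.2), then desingularize by log blow-ups as in \cite[\S 10.4]{kato} and \cite[\S 5.3]{nizi}. Your additional remarks on projectivity of the subdivision and triviality of the log structure over $\eta$ are accurate elaborations of what the paper leaves implicit.
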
 

\begin{proof} Since the model $\mathcal{X}^\dag$ from Theorem \ref{maintheorem} is log smooth over the log regular scheme $S^\dag$, it is log regular \cite[Theorem 8.2]{kato}. Hence $\mathcal{X}^\dag$ can be desingularized using 
log blow-ups (see \cite[\S 10.4]{kato} and \cite[\S 5.3]{nizi}), yielding the desired regular model. \end{proof}

    \begin{rema} The original motivation for this note lies in the recent paper \cite{smeets}, where the second author proved a trace formula conjectured by Nicaise for the class of varieties having ``logarithmic good reduction''. Such a formula had been proven for cohomologically tame semi-abelian varieties by Halle--Nicaise \cite[\S 8.1]{halnic}. Hence Theorem \ref{maintheorem} shows that their result in the case of abelian varieties is a special case of the main result of \cite{smeets}. \end{rema}

\subsection{Conventions} We work with log structures in the \'etale topology. The theory of log regularity and desingularization by log blow-ups has been developed by Kato for Zariski log structures \cite{kato}; however, Nizio\l\ \cite{nizi} showed that starting from an \'etale log structure, one can essentially produce a Zariski log structure using suitable log blow-ups. For our purposes, we can (and will) use Kato's theory in the \'etale setting, without further mention.

\subsection{Acknowledgements} We are grateful to Dan Abramovich, Lars Halle, Luc Illusie, Klaus K\"unnemann, Johannes Nicaise, Martin Olsson, Takeshi Saito and Heer Zhao for useful discussions, and to the referee for a careful reading. We acknowledge the support of the European Research Council's FP7 programme under ERC Grant Agreements $\sharp$306610 (MOTZETA, J. Nicaise) and $\sharp$615722 (MOTMELSUM, R. Cluckers). The second-named author is a postdoctoral fellow of FWO Vlaanderen (Research Foundation -- Flanders).

\normalsize
\section{Proof of the main result}

Let us now prove the main result. The essential observation is that a deep theorem of Gabber (written down by Illusie--Temkin in \cite{illusie2}) on quotients of varieties equipped with a tame group action combines very well with the theory of degenerations of abelian varieties due to Mumford and Faltings--Chai, especially in its ``equivariant'' version developed by K\"unnemann \cite{ku}. We keep the notation used in the introduction; in particular, we assume that $A$ is a cohomologically tame abelian variety. We start with a very  easy observation:

    \begin{lemm}\label{tamextension} There exists a tame extension $L/K$ such that $A_L$ has semistable reduction, i.e. there exists a semi-abelian scheme $\mathcal{A}$ which is a model of $A_L$ over $\mathcal{O}_L$.
    \end{lemm}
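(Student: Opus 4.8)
The plan is to deduce the lemma from Grothendieck's theory of semistable reduction together with the cohomological tameness hypothesis. Write $K^{nr} \subseteq K^t$ for the maximal unramified extension of $K$ inside $K^s$, let $I = \mathrm{Gal}(K^s/K^{nr})$ be the inertia group, and set $V_\ell(A) = T_\ell(A) \otimes_{\mathbf{Z}_\ell} \mathbf{Q}_\ell$. By Grothendieck's semistable reduction criterion \cite{grothendieck}, for a finite separable extension $L/K$ the base change $A_L$ has semistable reduction over $\mathcal{O}_L$ if and only if the inertia group $I_L = \mathrm{Gal}(K^s/L\cdot K^{nr})$ acts unipotently on $V_\ell(A)$; recall also that by the $\ell$-adic monodromy theorem the action of $I$ on $V_\ell(A)$ is quasi-unipotent. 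So the task reduces to producing a \emph{tame} extension $L/K$ over which the non-unipotent part of the inertia action is killed.

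First I would invoke cohomological tameness. For an abelian variety it is equivalent to the triviality of the action of $P$ on $V_\ell(A) \cong H^1(A\times_K K^s, \mathbf{Q}_\ell)^\vee$, as recalled after Theorem \ref{maintheorem}. Hence the representation $\rho : I \to \mathrm{GL}(V_\ell(A))$ factors through the tame quotient $I^t = I/P = \mathrm{Gal}(K^t/K^{nr})$, which is procyclic and pro-(prime-to-$p$); fix a topological generator $\sigma$. By quasi-unipotence the eigenvalues of $\rho(\sigma)$ are roots of unity, so its multiplicative Jordan decomposition $\rho(\sigma) = s\, u$ has $s$ semisimple of finite order $m$ and $u$ unipotent, with $su = us$. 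The key point is that $p \nmid m$: passing to the semisimplification of $\rho$, the generator $\sigma$ acts by a semisimple automorphism of the same order $m$, which therefore generates a finite quotient of the pro-(prime-to-$p$) group $I^t$, forcing $m$ to be prime to $p$.

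Finally I would choose a totally tamely ramified extension $L/K$ of ramification index $m$, which exists and is tame precisely because $p \nmid m$. Then $I_L$ surjects onto the closed subgroup of $I^t$ topologically generated by $\sigma^m$, so $\rho(I_L)$ is topologically generated by $\rho(\sigma)^m = s^m u^m = u^m$, a unipotent element; hence $\rho(I_L)$ is unipotent. By Grothendieck's criterion $A_L$ acquires semistable reduction, and the identity component of its Néron model over $\mathcal{O}_L$ is a semi-abelian scheme $\mathcal{A}$ with generic fibre $A_L$, as required. The main obstacle — indeed the only step requiring any argument — is the prime-to-$p$ claim in the second paragraph, which is exactly where the hypothesis of cohomological tameness is used; everything else is a direct appeal to Grothendieck's criterion and the monodromy theorem.
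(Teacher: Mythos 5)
Your proof is correct and follows essentially the same route as the paper: the paper's own argument simply asserts that tameness of the Galois action on $H^1(A \times_K K^s,\mathbf{Q}_\ell)$ yields a tamely ramified extension $L/K$ over which the inertia action becomes unipotent, and then invokes Grothendieck's semistable reduction criterion. Your write-up just fills in the (correct) details of that first step, via the Jordan decomposition of a topological generator of the tame quotient and the fact that a finite continuous quotient of $\mathrm{Gal}(K^t/K^{nr})$ has order prime to $p$.
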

    \begin{proof} Since the $\mathrm{Gal}(K^s/K)$-action on $H^1(A \times_K K^s, \mathbf{Q}_\ell)$ is tamely ramified, there exists a tamely ramified extension $L/K$ such that the action of $\mathrm{Gal}(K^s/L)$ on $H^1(A \times_K K^s,\mathbf{Q}_\ell)$ is unipotent. A famous theorem of Grothendieck \cite[Proposition 3.5, Corollaire 3.8]{grothendieck} implies that the abelian variety $A_L$ has a semi-abelian model $\mathcal{A}$ over $\mathcal{O}_L$. \end{proof}
    
Let us denote $\Gamma = \mathrm{Gal}(L/K)$ and $S' = \mathrm{Spec}\,\mathcal{O}_L$, with $L$ as in the lemma. The following result has been proven essentially (in a different language) by K\"unnemann \cite[Theorem 3.5]{ku}, building on the work of Mumford and Faltings--Chai.

\begin{theo}\label{def:compactification} There exists a flat, projective $S'$-scheme $\mathcal{P}$ such that
\begin{enumerate}
\item[(1)] we have an open immersion $\mathcal{A} \hookrightarrow \mathcal{P}$, and the natural $\Gamma$-action on $\mathcal{A}$ extends to $\mathcal{P}$, inducing a $\Gamma$-equivariant isomorphism on the generic fibres;
\item[(2)] the log scheme $\mathcal{P}^\dag$ is an fs log scheme, log smooth over $(S')^\dag$;
\item[(3)] finally, $\mathcal{P}$ admits an ample, $\Gamma$-equivariant line bundle $\mathcal{L}_{\mathcal{P}}$.
\end{enumerate}
\end{theo}

We now explain why this follows from \cite[Theorem 3.5]{ku}. We refer to the beautifully written paper \cite{ku} for more background, details and terminology.

We start with a line bundle $\mathcal{L}$ on $\mathcal{A}$ which is symmetric, $S'$-ample and $\Gamma$-equivariant. To construct it, choose any ample, symmetric line bundle on $A$, and consider its pullback to $A_L$; a sufficiently high power of the latter will extend to a line bundle on $\mathcal{A}$ with all of the desired properties. The pair $(\mathcal{A},\mathcal{L})$ determines the so-called Raynaud extension \cite[II, \S 1]{fc}: a global extension (over $S'$) given by a short exact sequence \begin{equation} 0\longrightarrow \mathcal{T} \longrightarrow \widetilde{\mathcal{G}}\stackrel{\pi}{\longrightarrow} \mathcal{B} \longrightarrow 0, \label{raynaud} \end{equation} where $\mathcal{T}$ is a torus of constant rank and $\mathcal{B}$ is an abelian scheme. We may and will assume $\mathcal{T}$ to be split: this can be achieved using a finite \'etale base change, which is completely harmless for our purposes (as will be clear from what follows). The line bundle $\mathcal{L}$ induces a cubical, ample line bundle $\widetilde{\mathcal{L}}$ on $\widetilde{\mathcal{G}}$ which, under our assumptions, descends to a non-unique ample line bundle $\mathcal{M}$ on $\mathcal{B}$ (i.e. $\pi^\star \mathcal{M} = \widetilde{\mathcal{L}}$). Fixing such a line bundle yields an object $(\mathcal{A},\mathcal{L},\mathcal{M})$ in the category $\mathrm{DEG}_{\mathrm{ample}}^{\mathrm{split}}$, in the notation of \cite[\S 2.1]{ku}. One easily checks that the Galois group $\Gamma$ acts on this object by morphisms in $\mathrm{DEG}_{\mathrm{ample}}^{\mathrm{split}}$.

In \cite[II, \S 6.2]{fc}, Faltings--Chai construct an equivalence of categories $\mathrm{F}$ between $\mathrm{DEG}_{\mathrm{ample}}^{\mathrm{split}}$ and the category $\mathrm{DD}_{\mathrm{ample}}^{\mathrm{split}}$ of \emph{split, ample degeneration data}; a quasi-inverse for $\mathrm{F}$ is given by \emph{Mumford's construction} (see \cite[III]{fc} and \cite[\S 2.13]{ku}). Hence we can associate to the triple $(\mathcal{A},\mathcal{L},\mathcal{M})$ a tuple \begin{equation} \label{degdata} \mathrm{F}(\mathcal{A},\mathcal{L},\mathcal{M}) := (\mathcal{B},X,Y,\phi,c,c^t,\widetilde{\mathcal{G}},\iota,\tau,\widetilde{\mathcal{L}},\mathcal{M},\lambda_{\mathcal{B}},\psi, a,b)\end{equation} of degeneration data. We refer to \cite[\S 2.2]{ku} for details, but recall here that $X$ and $Y$ are lattices of rank equal to the toric rank of $\mathcal{T}$, equipped with a $\Gamma$-action satisfying various natural compatibility conditions, cfr. \cite[\S 2.4]{ku}. 

Let $M = X \oplus \mathbf{Z}$, and let $N = X^\vee \oplus \mathbf{Z}$ be the dual lattice. The group $Y \rtimes \Gamma$ acts on the vector space $N_{\mathbf{R}}$ (see \cite[\S 3.1]{ku}). We need a decomposition of the cone $$\mathcal{C} = X^\vee_\mathbf{R} \times \mathbf{R}_{> 0} \cup \{0\}$$ into rational polyhedral cones which contains $\{0\} \times \mathbf{R}_{> 0}$, is $(Y \rtimes \Gamma)$-admissible and admits a $k$-twisted $(Y \rtimes \Gamma)$-admissible polarization function, for some $k \in \mathbf{Z}_{>0}$. Let $\Sigma$ be such a decomposition, the existence of which follows from \cite[Proposition 3.3]{ku}.

\begin{rema} K\"unnemann actually needs a decomposition into \emph{smooth} cones, for the construction of regular models. However, any $\Sigma$ as above will suit our needs. \end{rema}

We will now verify that K\"unnemann's method (applied to $\Sigma$) yields a model with the properties listed in Theorem \ref{def:compactification}. The only thing we need to check is the log smoothness property; everything else works as in the proof of \cite[Theorem 3.5]{ku}.

Let $\sigma \in \Sigma$. The projection $N = X^\vee \oplus \mathbf{Z} \to \mathbf{Z}$ onto the second factor then yields a homomorphism of monoids $f_\sigma: \sigma \cap N \to \mathbf{N}$, with dual $f_\sigma^\vee: \mathbf{N} \to \sigma^\vee \cap M$. Fix a uniformizer $\pi$ of $\mathcal{O}_L$ and consider the homomorphism $\mathbf{Z}[\mathbf{N}] \to \mathcal{O}_L: 1 \mapsto \pi$. Define \begin{eqnarray*} Z(\sigma) & = & \mathrm{Spec}\left(\mathcal{O}_L \otimes_{\mathbf{Z}[\mathbf{N}]} \mathbf{Z}[\sigma^\vee \cap M]\right) \\ & = & \mathrm{Spec}\left(\mathcal{O}_L[\sigma^\vee \cap M]/(f_\sigma^\vee(1) - \pi)\right).\end{eqnarray*} The homomorphism of monoids $\sigma^\vee \cap M \to \mathcal{O}_L[\sigma^\vee \cap M]/(f_\sigma^\vee(1) - \pi)$ defines a natural fs log structure on the toric scheme $Z(\sigma)$, yielding the fs log scheme $Z(\sigma)^\dag$. 
\begin{lemm} \label{logsmooth} The resulting map of fs log schemes $Z(\sigma)^\dag \to (S')^\dag$ is log smooth. \end{lemm}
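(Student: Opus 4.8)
The plan is to apply Kato's chart criterion for log smoothness \cite[Theorem 3.5]{katologstructures}. By construction $(S')^\dag$ carries the chart $\mathbf{N} \to \mathcal{O}_L$, $1 \mapsto \pi$, while the fs log structure on $Z(\sigma)$ is by definition the one associated to the chart $\sigma^\vee \cap M \to \mathcal{O}(Z(\sigma))$; the structure morphism $Z(\sigma)^\dag \to (S')^\dag$ is thus modelled on the monoid homomorphism $f_\sigma^\vee : \mathbf{N} \to \sigma^\vee \cap M$. The key point I would exploit is that the very description $Z(\sigma) = \mathrm{Spec}(\mathcal{O}_L \otimes_{\mathbf{Z}[\mathbf{N}]} \mathbf{Z}[\sigma^\vee \cap M])$ exhibits $Z(\sigma)$ as the fibre product $S' \times_{\mathrm{Spec}\,\mathbf{Z}[\mathbf{N}]} \mathrm{Spec}\,\mathbf{Z}[\sigma^\vee \cap M]$. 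Hence the canonical morphism $Z(\sigma) \to S' \times_{\mathrm{Spec}\,\mathbf{Z}[\mathbf{N}]} \mathrm{Spec}\,\mathbf{Z}[\sigma^\vee \cap M]$ is the identity and is in particular \'etale, so the geometric hypothesis of the criterion holds for free. It remains only to check the combinatorial condition on $f_\sigma^\vee$.

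That condition requires the kernel of $(f_\sigma^\vee)^{\mathrm{gp}} \colon \mathbf{N}^{\mathrm{gp}} \to (\sigma^\vee \cap M)^{\mathrm{gp}}$ and the torsion subgroup of its cokernel to be finite of order invertible on $Z(\sigma)$. First, $\sigma^\vee \cap M$ is fine and saturated by Gordan's lemma, so the chart is admissible. Dually to the projection $N = X^\vee \oplus \mathbf{Z} \to \mathbf{Z}$ onto the second factor, the map $(f_\sigma^\vee)^{\mathrm{gp}}$ is the inclusion $\mathbf{Z} \hookrightarrow M = X \oplus \mathbf{Z}$, $1 \mapsto (0,1)$. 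Here I would use that $\sigma$ is strongly convex: being a cone of the decomposition $\Sigma$ of $\mathcal{C} \subseteq X^\vee_{\mathbf{R}} \times \mathbf{R}_{\geq 0}$, it contains no line, so $\sigma^\vee$ is full-dimensional in $M_{\mathbf{R}}$ and therefore $\sigma^\vee \cap M$ generates $M$ as a group, i.e. $(\sigma^\vee \cap M)^{\mathrm{gp}} = M = X \oplus \mathbf{Z}$. The homomorphism $\mathbf{Z} \to X \oplus \mathbf{Z}$, $1 \mapsto (0,1)$, is then visibly injective, with cokernel isomorphic to the lattice $X$, hence torsion-free. Both the kernel and the torsion of the cokernel are thus trivial, and Kato's criterion immediately gives that $Z(\sigma)^\dag \to (S')^\dag$ is log smooth, of relative dimension equal to the rank of $X$, i.e. the toric rank of $\mathcal{T}$.

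The argument is therefore essentially formal once the two chart monoids are correctly identified, and I expect no serious analytic input at this stage: the proof uses neither tameness nor any invertibility hypothesis, precisely because the relevant kernel and cokernel-torsion vanish outright. The only genuine point requiring care is the bookkeeping that recognizes $Z(\sigma)$ as exactly the base change occurring in \cite[Theorem 3.5]{katologstructures}, together with the strong convexity of the cones of $\Sigma$, which is what guarantees $(\sigma^\vee \cap M)^{\mathrm{gp}} = M$ and hence a torsion-free cokernel. This is the main obstacle only in the sense of being the one place where the geometry of $\Sigma$ enters; everything else is a routine application of the criterion.
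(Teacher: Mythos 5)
Your proof is correct and follows essentially the same route as the paper: both apply Kato's chart criterion to the homomorphism $f_\sigma^\vee:\mathbf{N}\to\sigma^\vee\cap M$, using that $Z(\sigma)$ is by construction the relevant base change and that $(f_\sigma^\vee)^{\mathrm{gp}}$ is injective with torsion-free cokernel. The only (harmless) difference is that you identify $(\sigma^\vee\cap M)^{\mathrm{gp}}=M$ via strong convexity of $\sigma$ to get the cokernel equal to $X$, whereas the paper merely observes that $(\sigma^\vee\cap M)^{\mathrm{gp}}$ is a subgroup of $X\oplus\mathbf{Z}$ containing $\{0\}\oplus\mathbf{Z}$, so the cokernel embeds into $X$ and is torsion-free.
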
 
\begin{proof} This follows from Kato's criterion \cite[Theorem 3.5]{katologstructures}. Indeed, the map $$(f_\sigma^\vee)^\mathrm{gp}: \mathbf{Z} \to (\sigma^\vee \cap M)^\mathrm{gp}$$ is injective. The group $(\sigma^\vee \cap M)^\mathrm{gp}$ is a subgroup of $X \oplus \mathbf{Z}$ which contains $\{0\} \oplus \mathbf{Z}$. An easy calculation shows that $(f_\sigma^\vee)^\mathrm{gp}$ simply maps $\mathbf{Z}$ isomorphically onto the last factor. Therefore the cokernel of $(f_\sigma^\vee)^\mathrm{gp}$ is torsion free. \end{proof}

The toric schemes $Z(\sigma)$ (as $\sigma$ varies) glue according to the inclusion relations between different elements of $\Sigma$, yielding an $S'$-scheme $Z$ such that $Z^\dag$ is log smooth over $(S')^\dag$ (by the lemma and the Zariski local nature of log smoothness). Since $\Sigma$ admits a suitable polarization function, we get a $\mathcal{T}$-linearized ample line bundle on $Z$ which we denote by $\mathcal{N}$. The pair $(Z,\mathcal{N})$ inherits an action of $\Gamma$. We take $$\widetilde{\mathcal{P}} = \widetilde{\mathcal{G}} \times^{\mathcal{T}} Z \stackrel{\widetilde{\pi}}{\longrightarrow} \mathcal{B},\ \ \widetilde{\mathcal{L}}_{\widetilde{\mathcal{P}}} = \widetilde{\mathcal{L}} \times^\mathcal{T} \mathcal{N}$$ (with the notation of (\ref{raynaud}), (\ref{degdata}) and \cite[\S 1.19]{ku}). 

K\"unnemann proves that  $(\widetilde{\mathcal{P}},\widetilde{\mathcal{L}}_{\widetilde{\mathcal{P}}})$ is a relatively complete model as in \cite[III, \S 3]{fc} for the data (\ref{degdata}), and that the $\Gamma$-action extends to this model. We have to check that $\widetilde{\mathcal{P}}^\dag$ is log smooth over $(S')^\dag$: this follows from Lemma \ref{logsmooth} and the fact that Zariski locally on $\mathcal{B}$, the contracted product $\widetilde{\mathcal{G}} \times^{\mathcal{T}} Z$ looks like $\mathcal{B} \times_{S'} Z$.

To construct the pair $(\mathcal{P},\mathcal{L}_{\mathcal{P}})$ as in Theorem \ref{def:compactification}, we apply Mumford's construction to $(\widetilde{\mathcal{P}},\widetilde{\mathcal{L}}_{\widetilde{\mathcal{P}}})$, as in \cite[\S 3.8]{ku}. Again the only additional statement to check is that $\mathcal{P}^\dag$ is log smooth over $(S')^\dag$. It suffices to check that the fibres are log smooth, by the lemma in the appendix. For the generic fibre this is clear. Concerning the special fibre, we have an \'etale, strict morphism of formal log schemes $\widehat{\widetilde{\mathcal{P}}^\dag} \to \widehat{\mathcal{P}^\dag}.$ Restricting to the closed point $s'$ yields an \'etale, strict morphism $\widetilde{\mathcal{P}}^\dag_{s'} \to \mathcal{P}^\dag_{s'}$. Since $\widetilde{\mathcal{P}}^\dag_{s'}$ is log smooth over $(s')^\dag = (s',\mathbf{N})$, so is $\mathcal{P}^\dag_{s'}$.

These arguments (together with all of the work in \cite[\S 3]{ku}) prove Theorem \ref{def:compactification}. \hfill \qed

\vskip 7pt

We will now use Gabber's theorem to prove our main result. Let us briefly recall the notion of a \emph{very tame group action} on a log scheme, which is the essential player in Gabber's result. As in \cite[\S 3.1]{illusie1}, an action of a finite group $\Gamma$ on an fs log scheme $(X,\mathcal{M}_X)$ is said to be \emph{tame} if for every geometric point $\overline{x}$ lying over a point $x$ of $X$, the order of the stabilizer $\Gamma_{\overline{x}}$ is prime to the characteristic of the residue field at $x$. The action is said to be \emph{very tame} if it is tame, and for every geometric point $\overline{x}$, the following conditions are satisfied: $\Gamma$ acts trivially on $\overline{\mathcal{M}}_{X,\overline{x}} = \mathcal{M}_{X,\overline{x}}/\mathcal{O}_{X,\overline{x}}^\times$, and  $\Gamma_{\overline{x}}$ acts trivially on the scheme $C_{X,\overline{x}} = \mathrm{Spec}\,\mathcal{O}_{X,\overline{x}}/I(\overline{x},\mathcal{M}_X)$, where as usual $I(\overline{x},\mathcal{M}_X)$ denotes the ideal of $\mathcal{O}_{X,\overline{x}}$ generated by $\mathcal{M}_{X,\overline{x}} \setminus \mathcal{O}_{X,\overline{x}}^\times$.



\begin{proof}[End of proof of Theorem 1.1]
Let $\mathcal{P}$ be as in Theorem \ref{def:compactification}; we can forget $\mathcal{L}_{\mathcal{P}}$ at this stage. Since the morphism $(S')^\dag \to S^\dag$ is log \'etale, the composition $\mathcal{P}^\dag \to (S')^\dag \to S^\dag$ is a log smooth morphism of fs log schemes endowed with a $\Gamma$-action. 

The hypotheses of \cite[Theorem 1.1]{illusie2} are satisfied. Note that $\Gamma = \mathrm{Gal}(L/K)$ can have order divisible by $p$: one may need a tamely ramified extension, with (necessarily separable) residue field extension of degree divisible by $p$, for $\mathcal{T}$ in the proof of Theorem 2.2 to become split. But $\Gamma$ acts tamely since the elements of $\Gamma$ which stabilize a \emph{geometric} point of the special fibre lie in the inertia subgroup, which does have order prime to $p$.

Therefore \cite[loc. cit.]{illusie2} yields the existence of a projective modification $h: \mathcal{P}_1 \to \mathcal{P}$ such that $\mathcal{P}_1^\dag$ is an fs log scheme, log smooth over $(S')^\dag$ and the $\Gamma$-action on $\mathcal{P}_1^\dag$ is now \emph{very} tame. This modification does not affect the generic fibre of $\mathcal{P}$. The quotient $\mathcal{X} = \mathcal{P}_1/\Gamma$ exists as a scheme since $\mathcal{P}_1$ is projective over $S'$ (e.g. \cite[Lemma 3.2]{saitologsmred}), and the resulting fs log scheme $\mathcal{X}^\dag$ is log smooth over $S^\dag$ by Gabber's theorem. Hence $\mathcal{X}$ is our desired model. \end{proof}

\section*{Appendix: a fibrewise criterion for log smoothness}

We used the following lemma, which is more general than needed. It may however be of independent interest (we could not locate such a result in the literature). 

\vskip 10pt
\noindent \textbf{\textit{Lemma. --- }} \textit{Let $f :(X,\mathcal{M}_X)\rightarrow (T,\mathcal{M}_T)$ be an integral morphism of fine log schemes with log smooth fibres, such that the morphism of underlying schemes $X \to T$ is locally of finite presentation and flat. Then $f$  is log smooth.}
\vskip 10pt
To prove this, we use Olsson's theory of stacks of log structures \cite{olsson}.

\begin{proof}
By \cite[4.6.(ii)]{olsson}, it suffices to show that the induced morphism 
\begin{equation}\label{lemm:logmap}
\mathcal{L}og(f):\mathcal{L}og_X\rightarrow \mathcal{L}og_T
\end{equation}
of algebraic stacks is formally smooth. By our assumptions, it is locally of finite presentation; hence it suffices to show that (\ref{lemm:logmap}) is flat and that its geometric fibres are smooth. 

We will prove the last statement first. Given a scheme $Y$, denote by  $\underline{Y}$ the associated stack. Let $\underline{t} \to \mathcal{L}og_T$ be a geometric point. This yields a morphism of fine log schemes $(t,\mathcal{M}_t) \to (T,\mathcal{M}_T)$, which need not be strict: one obtains $(t,\mathcal{M}_t)$ as the image of $\mathrm{Id} \in \underline{t}(t)$ in $\mathcal{L}og_T$. Now $(X_{t}, \mathcal{M}_{X_{t}})$ is given by the cartesian diagram



\begin{center}
\begin{tikzpicture}[auto]
\node (L1) {$(X_t,\mathcal{M}_{X_t})$};
\node (L2) [below= 1.2cm of L1] {$(t,\mathcal{M}_t)$};
\node (M1) [right= 1.2cm  of L1] {$(X,\mathcal{M}_X)$};
\node (M2) at (M1 |- L2) {$(T,\mathcal{M}_T)$};
\draw[->] (L1) to node {} (M1);
\draw[->] (L2) to node {} (M2);
\draw[->] (L1) to node  {} (L2);
\draw[->] (M1) to node {} (M2);
\end{tikzpicture} \end{center}
(in the category of fine log schemes) and the underlying scheme $X_t$ is the fibre product in the category of schemes, since the morphism $(X,\mathcal{M}_X)\rightarrow (T,\mathcal{M}_T)$ is integral. By our assumptions on the fibres, the morphism 
$
(X_t,\mathcal{M}_{X_t})\rightarrow (t,\mathcal{M}_t)
$
is  log smooth. In particular, \cite[4.6.(ii)]{olsson} implies that $\mathcal{L}og_{X_t}\rightarrow \mathcal{L}og_t$
is smooth. By \cite[3.20]{olsson}, we have an isomorphism
$$\mathcal{L}og_{X_t}\cong \mathcal{L}og_X\times_{\mathcal{L}og_T}\mathcal{L}og_t.$$ Base change along the open immersion $\underline{t}\rightarrow \mathcal{L}og_t$ shows that the induced morphism $$\mathcal{L}og_X\times_{\mathcal{L}og_T}\underline{t} \to \underline{t}$$ is indeed smooth.  Hence it remains to show that (\ref{lemm:logmap}) is flat, or that $f$ is log flat. Since log smoothness implies log flatness, this follows from \cite[Theorem 2.6.3]{gillam}.
\end{proof}

\end{document}